\theoremstyle{plain} \newtheorem{lemma}{Lemma}
\theoremstyle{plain} \newtheorem{theorem}{Theorem}
\theoremstyle{plain} \newtheorem{corollary}{Corollary}
\begin{document}
\title{The cost of using exact confidence intervals for a binomial proportion}

\author
{ {M{\aa}ns Thulin}\\{\footnotesize{Department of Mathematics, Uppsala University}}}
\date{}

\maketitle


\begin{abstract}
\noindent 
When computing a confidence interval for a binomial proportion $p$ one must choose between using an exact interval, which has a coverage probability of at least $1-\alpha$ for all values of $p$, and a shorter approximate interval, which may have lower coverage for some $p$ but that on average has coverage equal to $1-\alpha$. We investigate the cost of using the exact one and two-sided Clopper--Pearson confidence intervals rather than shorter approximate intervals, first in terms of increased expected length and then in terms of the increase in sample size required to obtain a desired expected length. Using asymptotic expansions, we also give a closed-form formula for determining the sample size for the exact Clopper--Pearson methods. For two-sided intervals, our investigation reveals an interesting connection between the frequentist Clopper--Pearson interval and Bayesian intervals based on noninformative priors.
\noindent 
   \\[1.5mm] {\bf Keywords:} Asymptotic expansion; binomial distribution; confidence interval; expected length; sample size determination; proportion.
\end{abstract}

\section{Introduction}\label{introduction}
Inference for a binomial proportion $p$ is one of the most commonly encountered statistical problems, with important applications in areas such as clinical trials, risk analysis and quality control. Consequently, a large number of two-sided confidence intervals and one-sided confidence bounds for $p$ have been proposed by different authors. These are of two different types: \emph{exact} methods, that have a coverage at least equal to $1-\alpha$ for all $p\in(0,1)$, and \emph{approximate} methods, that may have coverage less than $1-\alpha$ for some values of $p$, but that have a coverage that in some sense is approximately equal to $1-\alpha$.

Research on confidence intervals and bounds for a binomial proportion has mostly focused on approximate intervals. In the methodological literature, exact intervals have often been deemed to be too conservative \citep{ac1,bcd1,nn1}, as they tend to be quite wide and have actual coverage levels that often are noticeably greater than $1-\alpha$. Nevertheless, the use of exact intervals for proportions is abundant among practitioners: see e.g. \citet{ab1}, \citet{ib1}, \citet{wa2} and \citet{su1} for some recent examples. By far the most widely used exact interval is the Clopper--Pearson interval, introduced by \citet{cp1}.

The benefit of using an exact interval is obvious: one does not risk that the actual coverage falls below $1-\alpha$. For this reason, some regulatory authorities require that exact intervals be used. Moreover, the binomial distribution is unusual in that we often can be sure that it is an accurate description of that which we are modelling and not just an approximation to the true distribution, as is often the case when continuous distributions are used for modelling. In such a situation, using an exact method seems reasonable. But there are also costs associated with the use of such an interval. When choosing between approximate and exact confidence methods, there is a trade-off in that exact intervals and bounds by construction are wider than the best approximate intervals, or equivalently, require a larger sample size in order to obtain a certain expected length. If one is unwilling to accept intervals and bounds with undercoverage for some values of $p$, there is a cost to pay in terms of expected length or required sample size. This paper seeks to quantify these costs.

In planned experiments, it is always important to determine a suitable sample size. Sample size determination for binomial confidence intervals has received much attention in recent years \citep{ka1,pi1,kp1,la1,go1,we1}, with different authors studying different intervals and methods for sample size calculations, the latter often of a computer-intensive nature. The first main contribution of this paper is closed-form formulas for computing the sample size required for the Clopper--Pearson methods to obtain a given expected length. This eliminates the need for computer-intensive methods for computing sample sizes and gives a better understanding of how the desired length and the parameters $p$ and $\alpha$ affect the sample size.


The second main contribution is closed-form expressions for the excess length and increase in required sample size that comes from using the exact Clopper--Pearson methods instead of approximate methods. We obtain these expressions by deriving asymptotic expansions for the exact Clopper--Pearson methods, extending the work of \citet{bcd2}, \citet{cai1} and \citet{st1} on the asymptotics of approximate binomial confidence methods to exact intervals and bounds.

The rest of the paper is organised as follows. In Section \ref{methods} we introduce the Clopper--Pearson methods along with other exact and approximate confidence methods. In Section \ref{twosided}  we give an asymptotic expression for the expected length of the Clopper--Pearson interval. This allows us to give a formula for computing the sample size, and to determine the cost of using an exact interval rather than an approximate interval, in terms of expected length and sample size. In Section \ref{onesided} we discuss the one-sided Clopper--Pearson bound and give expressions for its expected distance to $p$ and the cost of using an exact bound. In Section \ref{disc} we discuss costs associated with approximate intervals and state some conclusions. All proofs and technical details are deferred to an appendix.

\section{Binomial confidence methods}\label{methods}


\subsection{The Clopper--Pearson interval and bounds}
The two-sided Clopper--Pearson interval  for a proportion $p$ is an inversion of the equal-tailed binomial test: the interval contains all values of $p$ that aren't rejected by the test at confidence level $\alpha$. Given an observation $X$, the lower limit is thus given by the value of $p_{L}$ such that
\begin{equation}\label{cp1}
\sum_{k=X}^n\binom{n}{k}p_{L}^k(1-p_{L})^{n-k}=\alpha/2
\end{equation}
and the upper limit is given by the $p_{U}$ such that
\begin{equation}\label{cp2}
\sum_{k=0}^{X}\binom{n}{k}p_{U}^k(1-p_{U})^{n-k}=\alpha/2.
\end{equation}


As is well-known, the computation of $p_{L}$ and $p_{U}$ is simplified by the following equality from \citet{kotz1}. Let $f(t,a,b)$ be the density function of a $Beta(a,b)$ random variable. Then
\begin{equation}\label{kotz}
\sum_{k=X}^n\binom{n}{k}p^k(1-p)^{n-k}=\int_0^pf(t,X,n-X+1)dt.
\end{equation}
When (\ref{kotz}) is plugged into (\ref{cp1}) and (\ref{cp2}), the problem of finding $p_{L}$ and $p_{U}$ reduces to inverting the distribution functions of two beta distributions. Consequently, the endpoints of the Clopper--Pearson interval are given by quantiles of beta distributions:
\begin{equation}\label{betaapprox}
(p_{L},p_{U})=\Big{(}B(\alpha/2,X,n-X+1),\quad B(1-\alpha/2,X+1,n-X)\Big{)}.
\end{equation}
When $X$ is either 0 or $n$, closed-form expressions for the interval bounds are available. When $X=0$ the interval is $(0,1-(\alpha/2)^{1/n})$ and when $X=n$ it is $((\alpha/2)^{1/n},1)$. For other values of $X$, (\ref{betaapprox}) must be evaluated numerically. The interval is implemented in most statistical software packages; it can for instance be found in the \texttt{PropCIs} package in R and computed using the \texttt{PROC FREQ} command in SAS. 

Some authors \citep{ac1,bcd1} have argued that when choosing between confidence intervals, it is often preferable to use an interval with a simple closed-form formula rather than one that requires numerical evaluation, as the former is easier to present and to interpret. Next, we give asymptotic expansions of $p_{L}$ and $p_{U}$, that function as good approximations when $n\geq 40$, and can be used if a closed-form formula for the Clopper--Pearson interval is desired. As an example, when $n=50$ the upper bound is accurate up to two decimal places for $X\notin\{0,1,2,n\}$.

\begin{theorem}\label{boundapprox1}
Let $X\in\{1,2,\ldots,n-1\}$ be fixed. Let $\hat{p}=X/n$, $\hat{q}=1-\hat{p}$ and $z_{\alpha/2}$ be the upper $\alpha/2$ quantile of the standard normal distribution.

The bounds of the Clopper--Pearson interval are, up to $O(n^{-3/2})$,
\[\begin{split}
p_L=\hat{p}&-n^{-1/2}z_{\alpha/2}(\hat{p}\hat{q})^{1/2}+(3n)^{-1}\Big{(}2(1/2-\hat{p})z_{\alpha/2}^2-
(1+\hat{p})\Big{)}\qquad\mbox{and}\\
p_U=\hat{p}&+n^{-1/2}z_{\alpha/2}(\hat{p}\hat{q})^{1/2}+(3n)^{-1}\Big{(}2(1/2-\hat{p})z_{\alpha/2}^2+1+
\hat{q}\Big{)}.
\end{split}\]
\end{theorem}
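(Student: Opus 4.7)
The plan is to exploit the beta-quantile representation in~(\ref{betaapprox}) and apply a Cornish--Fisher expansion. Both $p_U=B(1-\alpha/2,X+1,n-X)$ and $p_L=B(\alpha/2,X,n-X+1)$ are quantiles of a $\mathrm{Beta}(a,b)$ distribution with $a+b=n+1$; since the total parameter grows with $n$, the beta is asymptotically normal and its quantiles admit a Cornish--Fisher expansion in its mean $\mu$, standard deviation $\sigma$, and skewness $\gamma_1$.

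First I would write down the exact moments
\[
\mu=\frac{a}{a+b},\qquad \sigma^2=\frac{ab}{(a+b)^2(a+b+1)},\qquad \gamma_1=\frac{2(b-a)\sqrt{a+b+1}}{(a+b+2)\sqrt{ab}},
\]
and expand each in powers of $n^{-1}$ after substituting the relevant $a,b$ and rewriting them in terms of $\hat p=X/n$ and $\hat q=1-\hat p$. For $p_U$ this gives $\mu=\hat p+\hat q/n+O(n^{-2})$, $\sigma=\sqrt{\hat p\hat q/n}\,(1+O(n^{-1}))$, and $\gamma_1=2(1-2\hat p)/\sqrt{n\hat p\hat q}+O(n^{-3/2})$; the analogous expressions for $p_L$ interchange $\hat p$ and $\hat q$ in the $1/n$ correction to $\mu$ but leave $\sigma$ and $\gamma_1$ unchanged at leading order.

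Next I would substitute into the Cornish--Fisher formula
\[
y_q=\mu+\sigma\Bigl(z_q+\tfrac{1}{6}(z_q^2-1)\gamma_1\Bigr)+O(n^{-3/2}),
\]
with $z_q=z_{\alpha/2}$ for the upper bound and $z_q=-z_{\alpha/2}$ for the lower. The mean contributes $\hat p+\hat q/n$ in one case and $\hat p-\hat p/n$ in the other; the $\sigma z_q$ term contributes the leading $\pm n^{-1/2}z_{\alpha/2}\sqrt{\hat p\hat q}$ plus an $O(n^{-3/2})$ remainder that can be absorbed in the error; and the skewness term contributes a pure $O(n^{-1})$ piece $2(1/2-\hat p)(z_{\alpha/2}^2-1)/(3n)$, with the same sign in both cases since $z_q^2=z_{\alpha/2}^2$.

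Finally I would collect the $O(n^{-1})$ pieces. For $p_U$, the terms independent of $z_{\alpha/2}^2$ sum to $\hat q/n-2(1/2-\hat p)/(3n)=(3\hat q+2\hat p-1)/(3n)=(2-\hat p)/(3n)=(1+\hat q)/(3n)$, giving the stated expression; for $p_L$ the analogous calculation produces $-(1+\hat p)/(3n)$. The main technical obstacle is verifying that the higher-order Cornish--Fisher terms and the dropped $O(n^{-1})$ corrections to $\sigma$ and $\gamma_1$ are genuinely swept into an $O(n^{-3/2})$ remainder; this is routine once one observes that the $k$th standardised cumulant of $\mathrm{Beta}(a,b)$ with $a+b=n+1$ is $O(n^{-(k-2)/2})$ uniformly for $\hat p$ bounded away from $0$ and $1$, which is the range in which the closed-form expansion is intended to be used.
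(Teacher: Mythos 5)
Your derivation is correct: the moment expansions for the two beta distributions, the Cornish--Fisher substitution, and the collection of the $O(n^{-1})$ terms all check out (in particular $3\hat q+2\hat p-1=1+\hat q$ and $-3\hat p+2\hat p-1=-(1+\hat p)$, reproducing the asymmetric constants in the statement). The route is genuinely different from the paper's. The paper does not expand the beta quantiles from scratch: it observes that $p_L=B(\alpha/2,X,n-X+1)=B(\alpha/2,(X-1)+1,(n-1)-(X-1)+1)$ is the lower limit of the uniform-prior Bayesian interval evaluated at the shifted data $(X-1,n-1)$, and then imports the ready-made expansion (A.23) of Brown, Cai and DasGupta (2002) for Bayesian beta-quantile bounds, re-expressing it in terms of $X/n$. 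That reindexing trick buys two things: brevity, and the frequentist--Bayesian ``shrinkage'' interpretation that the paper exploits later in Section \ref{freqbayes}; it also delivers at no extra cost the $n^{-3/2}$ term of Lemma \ref{limlem}, which is needed for Theorems \ref{lenthm} and \ref{lenthm2}. Your direct Cornish--Fisher computation buys self-containedness and makes the mechanism transparent (the $\pm\hat q/n$ versus $-\hat p/n$ shift of the mean and the common skewness correction are visible term by term), at the price of having to justify the validity of the expansion for the beta family yourself --- which, as you note, follows from the decay of the standardised cumulants for $\hat p$ bounded away from $0$ and $1$ --- and of requiring the fourth-cumulant term if you wanted to push on to the $O(n^{-2})$ accuracy of Lemma \ref{limlem}. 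For the theorem as stated, to $O(n^{-3/2})$, your argument is complete modulo that routine validity check.
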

\noindent
Similar in construction to the two-sided interval, the one-sided Clopper--Pearson bounds are obtained by inverting one-sided binomial tests. Thus the $1-\alpha$ Clopper--Pearson upper bound $p_U$ is given by the $p_U$ such that
\begin{equation}
\sum_{k=0}^{X}\binom{n}{k}p_{U}^k(1-p_{U})^{n-k}=\alpha.
\end{equation}
In the following, we limit our study to upper bounds. For symmetry reasons, the results are however equally valid for lower bounds, as for the bounds under consideration, a lower bound $p_L$ for $p$ is equivalent to an upper bound for $q$, as $q_U=1-p_L$.

If a closed-form expression for $p_U$ is desired, it can be obtained in the form of an asymptotic expansion by replacing $\alpha/2$ with $\alpha$ in Theorem \ref{boundapprox1} above.


\subsection{Other exact intervals}
In much of the medical literature, as well as the rest of the present paper, the Clopper--Pearson interval is refered to as \emph{the} exact confidence interval for a binomial proportion. Despite this terminology, several other exact intervals have been proposed throughout the years. These alternative intervals do not admit closed-form expressions and are, to varying extents, computer-intensive.

There are several reasons as to why the Clopper--Pearson interval is the most widely used exact interval. One is simply tradition and availability: it has found its way in to classic statistical textbooks and has been implemented in almost all statistical software packages. Compared to the computer-intensive alternatives, the Clopper--Pearson interval is also considerably simpler computationally. Finally, it remains a natural choice in that it is the inversion of the well-known equal-tailed binomial test. 

In the two-sided case, however, there is room for improvement, at least if one is willing to let go of some natural properties of confidence intervals. Other exact intervals have been designed to be shorter than the Clopper--Pearson interval, by inverting two-sided tests that need not be equal-tailed. Moreover, the coverage probabilities of these intervals often fluctuate less from $1-\alpha$ than does the coverage of the Clopper--Pearson interval. 

The Blyth--Still--Casella interval \citep{bs1,ca1} is guaranteed to be the shortest exact interval, but has the odd property that it is not nested, in the sense that the 90 \% interval need not be contained in the 95 \% interval \citep[Theorem 2]{bl1}. This is also true for the intervals of \citet{st2}.

The \citet{st3} procedure yields nested intervals that are shorter than the Clopper--Pearson interval, but will in some cases result in two separate intervals rather than one connected interval. \citet{bl1} proposed a nested exact interval that, while wider than the Blyth--Still--Casella interval, always is contained in the Clopper--Pearson interval. It is however sometimes a union of disjoint intervals and its upper bound is decreasing but not strictly decreasing in $\alpha$ when $n$ and $X$ are fixed \citep{vh1}. The interval based on the inverted exact likelihood ratio test suffers from similar problems \citep{vh1}.

The Clopper--Pearson interval, in contrast, is nested, is always a connected set and has bounds that are strictly monotone in $\alpha$. While it is possible to obtain shorter exact confidence intervals for a binomial proportion, this seems to be associated with the loss of nestedness, connectedness or monotonicity. As we consider these properties to be of importance, we will only include the Clopper--Pearson interval and bounds in the following sections, and will out of convenience refer to them as \emph{the} exact methods.

Implementations of some of the alternative exact intervals are readily available. The Blyth--Still--Casella interval has been implemented in StatXact and \citet{bl1} gave an S-PLUS function for his interval. A more efficient implementation of Blaker's interval is found in the R package \texttt{BlakerCI} \citep{kl1}.

\subsection{Approximate confidence intervals and bounds}\label{appendix1}
Throughout the text, the Clopper--Pearson methods will be compared to several well-known approximate methods. These are described below, along with the commonly used Wald interval. For more thorough reviews of binomial confidence methods, see \citet{ne1}, \citet{cai1} and \citet{bcd1,bcd2}. In the descriptions below, $\hat{p}=X/n$ is the sample proportion, $\hat{q}=1-\hat{p}$ and $z_{\alpha/2}$ is the $100(1-\alpha/2)$th percentile of the standard normal distribution.

\emph{The Wald interval.} Inversion of the large sample test $|(\hat{p}-p)(\hat{p}\hat{q}/n)^{-1/2}|\leq z_{\alpha/2}$ leads to the Wald interval, which is presented in virtually every introductory statistics course: $\hat{p}\pm z_{\alpha/2}\sqrt{\hat{p}\hat{q}/n}$. The Wald interval suffers from particularly erratic coverage properties, and cannot be recommended for general use \citep{bcd1,ne1}.

\emph{The Wilson score interval.} Like the Wald interval, the \citet{wi1} score interval is based on an inversion of the large sample normal test $|(\hat{p}-p)/d(\hat{p})|\leq z_{\alpha/2}$, where $d(\hat{p})$ is the standard error of $\hat{p}$. Unlike the Wald interval, however, the inversion is obtained using the null standard error $(p(1-p)/n)^{1/2}$ instead of the sample standard error. The solution of the resulting quadratic equation leads to the confidence interval
\[
\frac{X+z_{\alpha/2}^2/2}{n+z_{\alpha/2}^2}\pm \frac{z_{\alpha/2}}{n+z_{\alpha/2}^2}\sqrt{\hat{p}\hat{q}n+z_{\alpha/2}^2/4}.
\]
The Wilson score interval has favourable coverage and length properties and is often recommended for general use \citep{bcd1,ne1}.

\emph{The Agresti--Coull interval.} For $95 \%$ nominal coverage, \citet{ac1} proposed the use of the Wald interval with two successes and two failures added, i.e. with $n$ replaced by $n+4$ and $X$ replaced by $X+2$. More generally, let $\tilde{n}=n+z_{\alpha/2}^2$, $\tilde{X}=X+z_{\alpha/2}^2/2$, $\tilde{p}=\tilde{X}/\tilde{n}$ and $\tilde{q}=1-\tilde{p}$. \citet{bcd1} dubbed the interval
$
\tilde{p}\pm z_{\alpha/2}\sqrt{\tilde{p}\tilde{q}/\tilde{n}}
$
the Agresti-Coull interval. It has performance close to that of the Wilson interval, but is somewhat simpler to use.

\emph{Bayesian Beta intervals and bounds}. Let $B(\alpha,a,b)$ denote the $\alpha$-quantile of the $Beta(a,b)$ distribution. An equal-tailed Bayesian credible interval based on the $Beta(a,b)$ prior is given by $(B(\alpha/2,X+a,n-X+b),~ B(1-\alpha/2,X+a,n-X+b))$, where $B(\alpha,a,b)$ is the quantile function of the $Beta(a,b)$ distribution. Similarly, an upper bound is given by $B(1-\alpha,X+a,n-X+b)$. As these methods make use of beta quantiles, they are algebraically very similar to the Clopper--Pearson interval. This connection is discussed further in Section \ref{freqbayes}.

\emph{The Jeffreys interval and bound.} A commonly used Bayesian interval for $p$ is the Jeffreys interval $(B(\alpha/2,X+1/2,n-X+1/2),~ B(1-\alpha/2,X+1/2,n-X+1/2))$, which is the equal-tailed credible interval derived using the noninformative Jeffreys prior. Both the two-sided interval and the one-sided bound exhibit favourable frequentist properties \citep{bcd1,ne1,cai1}.


\emph{The second-order correct bound.} \citet{cai1} proposed a coverage-corrected version of the one-sided Wald bound, based on second-order asymptotic expansions. \citet{cai1} recommended it for general use and gave a closed-form expression for the bound.

\emph{The modified loglikelihood root bound.} \citet{st1} studied the bound obtained by inverting the modified loglikelihood root test and found it to have very favourable coverage and length properties. It cannot be expressed in a closed form, but \citet{st1} gave asymptotic expansions that can be used as approximations.

\section{Two-sided intervals}\label{twosided}


\subsection{Expected length}
Let $q=1-p$ and let $L_{CP}=p_U-p_L$ denote the length of the Clopper--Pearson interval. Next, we present an asymptotic expression for the expectation of $L_{CP}$.
\noindent 
\begin{theorem}\label{lenthm}
As $n\rightarrow\infty$ the expected length of the $1-\alpha$ Clopper--Pearson interval is
\begin{equation}\label{lenexp}\begin{split}
E(L_{CP})=&2z_{\alpha/2}n^{-1/2}(pq)^{1/2}+n^{-1}\\
&\qquad+n^{-3/2}(pq)^{-1/2}\frac{z_{\alpha/2}}{18}\Big{(}z_{\alpha/2}^2-\frac{5}{2}-17pq-13pqz_{\alpha/2}^2\Big{)}+O(n^{-2}).
\end{split}\end{equation}
\end{theorem}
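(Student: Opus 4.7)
The plan is to build the asymptotic expansion of $\mean(L_{CP})$ in two stages: first refine the pointwise expansion of the bounds from Theorem~\ref{boundapprox1} by one additional order, then take expectations term by term via the moments of the sample proportion.

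To begin, I would extend Theorem~\ref{boundapprox1}. Writing $p_U=B(1-\alpha/2,X+1,n-X)$ and $p_L=B(\alpha/2,X,n-X+1)$, a Cornish--Fisher expansion of the beta quantile, pushed one term beyond what Theorem~\ref{boundapprox1} required, would give $p_L$ and $p_U$ accurate up to $O(n^{-2})$. The $n^{-1/2}$ and $n^{-1}$ terms must reproduce the ones already stated, and the new contribution is an $n^{-3/2}$ term in each bound. Subtracting and noting that the deterministic parts of the $n^{-1}$ contributions combine as $(3n)^{-1}(1+\hat q + 1+\hat p)=n^{-1}$, one obtains
\[
L_{CP}=2n^{-1/2}z_{\alpha/2}(\hat p\hat q)^{1/2}+n^{-1}+n^{-3/2}h(\hat p,z_{\alpha/2})+O(n^{-2}),
\]
for an explicit function $h$ whose form will only be needed at $\hat p=p$, since it already multiplies $n^{-3/2}$.

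Next I would take expectations. Setting $Y=\hat p-p$, the moments $\mean(Y)=0$, $\mean(Y^2)=pq/n$, $\mean(Y^3)=pq(q-p)/n^2$ and $\mean(Y^4)=3(pq)^2/n^2+O(n^{-3})$ are standard. Taylor-expanding $(\hat p\hat q)^{1/2}=\{(p+Y)(q-Y)\}^{1/2}$ around $Y=0$ and using the identity $(q-p)^2=1-4pq$, a short calculation yields the clean bias
\[
\mean\bigl\{(\hat p\hat q)^{1/2}\bigr\}=(pq)^{1/2}-\frac{1}{8n(pq)^{1/2}}+O(n^{-2}),
\]
so the leading term of $L_{CP}$ contributes $2z_{\alpha/2}n^{-1/2}(pq)^{1/2}-z_{\alpha/2}/\{4n^{3/2}(pq)^{1/2}\}$. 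The constant $n^{-1}$ passes through unchanged, and $\mean\{n^{-3/2}h(\hat p,z_{\alpha/2})\}=n^{-3/2}h(p,z_{\alpha/2})+O(n^{-2})$. Summing the three contributions and rewriting the result in terms of the single symmetric quantity $pq$ (using $p+q=1$ and $(q-p)^2=1-4pq$) should recover the stated coefficient $\frac{z_{\alpha/2}}{18}(z_{\alpha/2}^2-5/2-17pq-13pqz_{\alpha/2}^2)/(pq)^{1/2}$.

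The main obstacle is the first step, namely obtaining the explicit function $h$. This requires carrying the expansion that underlies Theorem~\ref{boundapprox1} one order further, which in turn requires propagating the third cumulant of the standardised binomial through the quantile inversion and tracking the continuity adjustment implicit in passing from the binomial tail to the beta cdf via the Kotz identity~\eqref{kotz}; the algebra is delicate because several $O(n^{-3/2})$ contributions (from skewness, from the Jacobian of the quantile map, and from the integer shifts $X\pm 1$ and $n-X\pm 1$ in the beta parameters) must be collected consistently. Once $h$ is in hand, the moment expansion and collection of orders above are routine bookkeeping.
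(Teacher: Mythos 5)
Your outline follows the same route as the paper's own proof: expand both Clopper--Pearson bounds one order beyond Theorem~\ref{boundapprox1}, subtract, and take expectations using the moments of $\hat p$. The parts you execute are correct: the $n^{-1}$ contributions of $p_U-p_L$ do collapse to $n^{-1}$ with the $z_{\alpha/2}^2$ parts cancelling, and your bias computation $E\{(\hat p\hat q)^{1/2}\}=(pq)^{1/2}-1/\{8n(pq)^{1/2}\}+O(n^{-2})$ agrees with the expansion of the expected Wald length that the paper imports from Brown, Cai and DasGupta (2002). But the substance of the theorem is the explicit $n^{-3/2}$ coefficient, and that is precisely the step you defer as ``the main obstacle.'' The paper does not carry out a fresh Cornish--Fisher expansion: it observes that $p_L=B(\alpha/2,X,n-X+1)=B(\alpha/2,(X-1)+1,(n-1)-(X-1)+1)$ is the lower limit of the uniform-prior Bayesian interval evaluated at the shifted data $(X-1,n-1)$ (and analogously for $p_U$), so the published third-order expansion (A.23) of Brown, Cai and DasGupta (2002) for Bayesian beta quantiles can be imported wholesale and re-centred at $X/n$. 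That identification is what makes your function $h$ explicit without new asymptotic analysis; until $h$ is actually computed, your argument cannot confirm the constants $-5/2$, $-17$ and $-13$ in the statement.

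Two smaller points need attention to make the expectation step rigorous rather than ``routine bookkeeping.'' First, the expansion is invalid at $X\in\{0,n\}$; the paper notes that these outcomes contribute only $O((1/2)^{n})$ to the expected length and can be discarded. Second, a pointwise $O(n^{-2})$ remainder does not automatically have expectation $O(n^{-2})$, because the expansion is not uniform in $\hat p$ near the endpoints; the paper controls $E(R_n)$ by citing Theorem~7 of Brown, Cai and DasGupta (2002) and handles $E(m(\hat p))=m(p)+O(n^{-1})$ via the theorem in Section~27.7 of Cram\'{e}r (1946). Your proposal would need analogous justification at both places.
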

The expansion (\ref{lenexp}) is compared to the actual expected length in Figure \ref{len1}. Even for small values of $n$, the approximation comes quite close to the actual expected length over the entire parameter space.

\begin{figure}[H]
\begin{center}
   \caption{Comparison between the actual expected length and the expansion (\ref{lenexp}) for the nominal 95 \% Clopper--Pearson interval.}\label{len1}
   \includegraphics[width=\textwidth]{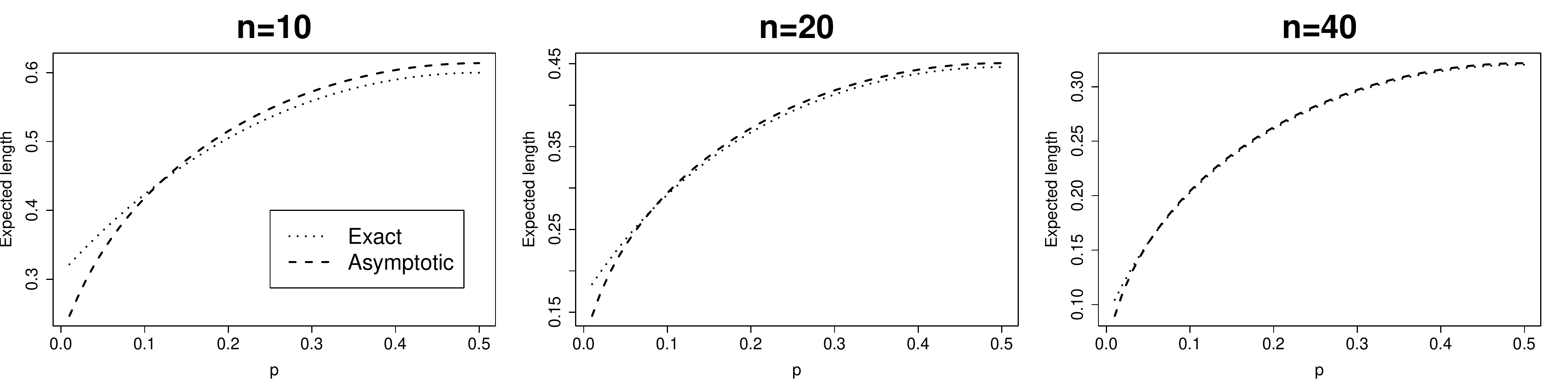}
\end{center}
\end{figure}

Having an expression for the expected length of the Clopper--Pearson interval allows us to evaluate its performance for different combinations of $n$, $p$ and $\alpha$. When planning an experiment, this is extremely useful as it can be used to determine what sample size we need in order to achieve a desired expected length. Methods for determining sample size are discussed next.


\subsection{Sample size determination}
Several different criterions can be considered when determining sample size, as discussed e.g. by \citet{go1}. We focus on a comparatively simple criterion: for a fixed confidence level $1-\alpha$ we wish to find the smallest sample size $n$ such that the expected length of the confidence interval is some fixed value $d$. As the value of $n$ will depend on $p$, we require that an initial guess $p_0$ for $p$ is available.

Studying the Clopper--Pearson interval, \citet{kp1} gave a first-order approximation of $E(L_{CP})$ in the form of beta quantiles and used that to numerically calculate the  sample size required to obtain a desired expected length $d$. Ignoring the higher terms of the expansion (\ref{lenexp}) we obtain the second-order approximation $E(L_{CP})\approx 2z_{\alpha/2}n^{-1/2}(pq)^{1/2}+n^{-1}$, which can be evaluated analytically. Given an initial guess $p_0$ for $p$, the equation
$2z_{\alpha/2}n^{-1/2}(p_0q_0)^{1/2}+n^{-1}=d$
has the solution
\begin{equation}\label{nekv}
n = \Big\lceil\frac{2z^2p_0q_0+2z\sqrt{z^2p_0^2q_0^2+dp_0q_0}+d}{d^2}\Big\rceil
\end{equation}
when rounded up to the nearest integer. This is a good approximation of the actual required sample size, with a small positive bias. At the 95 \% level it does typically not differ by more than 4 from the solution obtained by more complicated (and computer-intensive) exact numerical computations. For $p$ close to 1/2, the Krishnamoorthy--Peng method is slightly more accurate, whereas for $p$ close to 0 or 1, (\ref{nekv}) gives a better approximation. In either case, both approximations are accurate enough for most applications. As an example, when $p_0=0.05$ and $d=0.05$, the actual required sample size is 329, while our approximation yields $n=331$, corresponding to an actual expected length of 0.0498. In comparison with exact methods or the Krishnamoorthy--Peng procedure, (\ref{nekv}) offers greater computational ease without sacrificing much accuracy.

It is likewise possible to solve the cubic equation that results from including the $n^{-3/2}$-term of (\ref{lenexp}), but the solution does not yield a simple formula and does not give substantially improved accuracy.

A downside to this approach to sample size determination is that the initial guess $p_0$ may be quite wrong. This is particularly problematic if $p$ is closer to 1/2 than is $p_0$, in which case the calculated required sample size will be too small. As a safety measure, it is sometimes recommended to use the conservative guess $p_0=1/2$, which maximizes the required sample size. More often than not, however, this choice is needlessly conservative.

An alternative approach, with a Bayesian flavour, is to use a prior distribution for $p$ when determining the sample size. Beta distributions constitute a flexible and analytically tractable class of priors for $p$. For $p\sim Beta(a,b)$, we have
$$E\Big(2z_{\alpha/2}n^{-1/2}(pq)^{1/2}+n^{-1}\Big)=2z_{\alpha/2}n^{-1/2}\frac{\Gamma(a+1/2)\Gamma(b+1/2)}{(a+b)\Gamma(a)\Gamma(b)}+n^{-1}.$$
With $R(a,b)=\Gamma(a+1/2)\Gamma(b+1/2)\lbrack(a+
b)\Gamma(a)\Gamma(b)\rbrack^{-1}$, this gives the required sample size
$$n = \frac{2z_{\alpha/2}^2R^2(a,b)+2z_{\alpha/2}\sqrt{z_{\alpha/2}^2R^4(a,b)+dR^2(a,b)}+d}{d^2}.$$

When applying a frequentist procedure, the prior information about $p$ is typically diffuse, indicating that a low-informative prior should be used so as not to bias the sample size determination. One example is the Jeffreys prior $Beta(1/2,1/2)$, which puts more probability mass close to 0 and 1 and yields $R(1/2,1/2)=1/\pi$. Other examples include the uniform $Beta(1,1)$ prior, for which we have $R(1,1)=\pi/8$ and the $Beta(2,2)$ prior, which puts more mass close to 1/2, yielding $R(2,2)=9\pi/64$.

The required sample size for different combinations of $p$ and $\alpha$ is shown in Figure \ref{nfig2}. It is decreasing in $\alpha$, increasing in $p$ when $p< 0.5$ and decreasing in $p$ when $p> 0.5$.

\begin{figure}[H]
\begin{center}
   \caption{The required sample size for the Clopper--Pearson interval for different combinations of $p$ and $\alpha$.}\label{nfig2}
   \includegraphics[width=\textwidth]{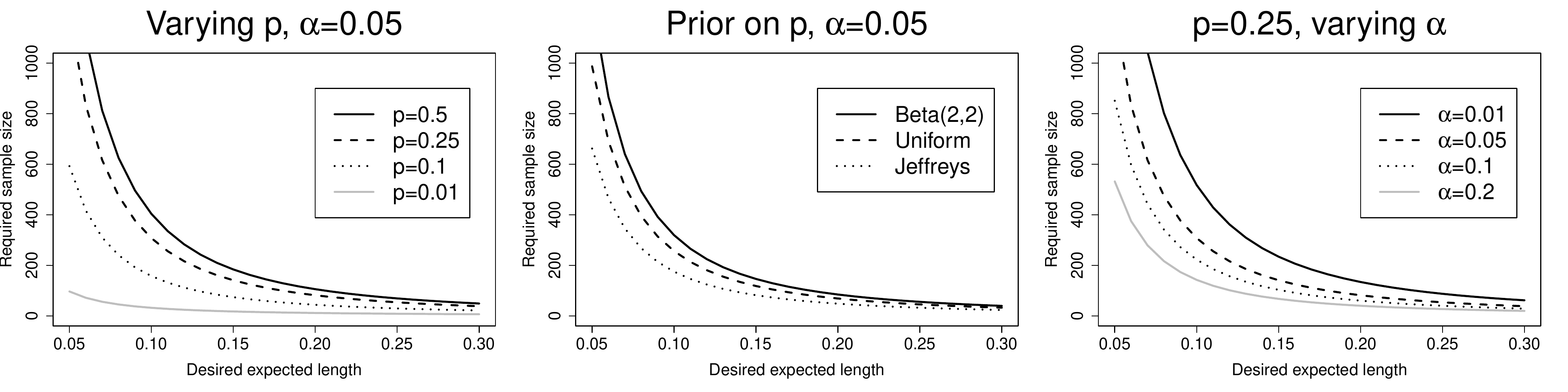}
\end{center}
\end{figure}

\emph{Remark.} In formulas similar to those above, some authors use $d$ to denote the expected half-length, or error tolerance, of a confidence interval. This may be inappropriate in the binomial setting, since using the half-length might give the false impression that all confidence intervals are symmetric about the unbiased estimator $\hat{p}=X/n$. This is not the case for the Clopper--Pearson interval and most good approximate intervals, including those presented in Section \ref{appendix1}. As an example, when $n=50$ and $p=0.01$, the expected length of the Clopper--Pearson interval is 0.044. Since the interval is boundary respecting, most of its length will be placed above $p$. The expected length is very much an interesting quantity when determining sample size, but for binomial proportions it should not be interpreted in terms of error tolerances.


\subsection{The cost of using the exact interval}
Next, we will study the cost of using the exact Clopper--Pearson interval instead of an approximate interval. We will do so by comparing the exact interval to three of the approximate intervals described in Section \ref{appendix1}: the Wilson score, Jeffreys and Agresti--Coull intervals. These intervals have been recommended as default intervals for a single proportion by several authors \citep{ac1,bcd1,ne1}.

First, we measure the cost in terms of increased expected length. By comparing the expansion in Theorem \ref{lenthm} to the expansions in Theorem 7 of \citet{bcd2}, we get the following expressions for how much the expected length of the confidence interval increases when the Clopper--Pearson interval is used instead of an approximate interval.
\begin{corollary}\label{lencor1}
The Clopper--Pearson interval is asymptotically wider than the approximate intervals described in Section \ref{appendix1}. In particular, compared to the length $L_J$ of the Jeffreys interval,
\begin{equation}\label{lengthcomp1}
E(L_{CP})=E(L_J)+n^{-1}+O(n^{-2}),
\end{equation}
\noindent
and if $L_A$ denotes the length of the Wilson or Agresti--Coull interval,
\begin{equation}\label{lengthcomp2}
E(L_{CP})=E(L_A)+n^{-1}+O(n^{-3/2}).
\end{equation}
\end{corollary}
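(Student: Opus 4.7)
The natural approach is to compare our expansion of $E(L_{CP})$ from Theorem \ref{lenthm} term-by-term with the asymptotic expansions for the Wilson, Agresti--Coull, and Jeffreys intervals already given in Theorem 7 of \citet{bcd2}. Each of those expansions has the form $E(L) = 2z_{\alpha/2}n^{-1/2}(pq)^{1/2} + c(p,z_{\alpha/2})\,n^{-3/2} + O(n^{-2})$, with \emph{no} $n^{-1}$ contribution. Subtracting from the Clopper--Pearson expansion (\ref{lenexp}) therefore leaves $n^{-1}$ as the leading-order difference. Since this leading difference is positive, it dominates the remaining terms for all sufficiently large $n$, which simultaneously establishes the asymptotic widening claim and produces the $n^{-1}$ main term of both displayed identities.

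The identity (\ref{lengthcomp2}) for the Wilson and Agresti--Coull intervals then follows immediately: the respective $n^{-3/2}$ coefficients from \citet{bcd2} differ from the $n^{-3/2}$ coefficient in (\ref{lenexp}), so the difference $E(L_{CP}) - E(L_A)$ equals $n^{-1}$ plus a residual of order $n^{-3/2}$, giving exactly the remainder $O(n^{-3/2})$ asserted in the corollary.

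For (\ref{lengthcomp1}), the sharper remainder $O(n^{-2})$ forces the $n^{-3/2}$ coefficients of the Clopper--Pearson and Jeffreys expansions to coincide exactly. The core verification therefore reduces to checking the algebraic identity
\[
c_J(p,z_{\alpha/2}) \;=\; (pq)^{-1/2}\,\frac{z_{\alpha/2}}{18}\Big(z_{\alpha/2}^2 - \tfrac{5}{2} - 17pq - 13pq\,z_{\alpha/2}^2\Big),
\]
where $c_J$ is the $n^{-3/2}$ coefficient of the Jeffreys expansion in \citet{bcd2}. Once this match is confirmed, the subtraction yields $E(L_{CP}) - E(L_J) = n^{-1} + O(n^{-2})$, as claimed. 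This algebraic coincidence at order $n^{-3/2}$ is the only step that goes beyond routine bookkeeping, and thus the main obstacle; it reflects the close structural kinship between the two intervals, both being constructed from beta quantiles, a point elaborated in Section \ref{freqbayes}.
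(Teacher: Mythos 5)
Your proposal follows essentially the same route as the paper's proof: the corollary is obtained by subtracting the expansions in Theorem 7 of \citet{bcd2} from (\ref{lenexp}), using that none of the approximate intervals carries an $n^{-1}$ term and that the $n^{-3/2}$ coefficient cancels exactly against the Jeffreys interval but not against the Wilson or Agresti--Coull intervals. The paper simply goes one step further and records the residual $n^{-3/2}$ differences for the latter two explicitly.
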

\noindent
Expanded versions of (\ref{lengthcomp2}) for the different intervals, including the $n^{-3/2}$-terms, are given in the proof in the appendix.

Up to $O(n^{-3/2})$, the increase in expected length is inversely proportional to $n$. Note that, up to $O(n^{-3/2})$, the increase does not depend on $p$ or $\alpha$. The cost of using an exact interval, in terms of expected length, is thus more or less constant for a fixed $n$. This is an interesting and somewhat unexpected fact, since the expected lengths of these confidence interval are highly dependent on both $p$ and $\alpha$.\\

\noindent
Next, we consider required sample size. As the Clopper--Pearson interval is wider than the approximate intervals, it naturally requires larger sample sizes to obtain a particular expected length $d$. Let $n_{CP}(d,p,\alpha)$ be the minimum sample size for which $E_p(L_{CP})\leq d$ at the $1-\alpha$ level. Similarly, let $n_{J}(d,p,\alpha)$ be the minimum sample size for which the expected length of the Jeffreys interval is at most $d$ under $p$ at the $1-\alpha$ level.

As noted by \citet{pi1}, the sample size for the Jeffreys interval is well approximated by $n_{J}(d,p_0,\alpha)=4z_{\alpha/2}^2p_0q_0d^{-2}$. Comparing this to (\ref{nekv}) without rounding, the increase in required sample size $n^+_J(d,p_0,\alpha)=n_{CP}(d,p_0,\alpha)-n_{J}(d,p_0,\alpha)$ can be approximated by
\begin{equation}\label{napprox}
n^+_J(d,p_0,\alpha)\approx\frac{d-2z_{\alpha/2}\Big(z_{\alpha/2}p_0q_0
-\sqrt{(z_{\alpha/2}p_0q_0)^2+dp_0q_0}\Big)}{d^2}.
\end{equation}

This approximation is quite accurate, generally differing by less than 1 when compared to the value for $n^+_J$ obtained using substantially more computer-intensive exact computations.

(\ref{napprox}) is plotted as a function of $d$ for three choices of $p_0$ in Figure \ref{nfigWAC}. When shorter intervals are desired, the increase in required sample size can be substantial. When $d=0.05$, for instance, $n^+_J$ is 40 for $0.05\leq p_0\leq 0.95$.

As was the case for the expected length, the increase $n^+_J$ is remarkably insensitive to $p$ and $\alpha$: there is no concernable difference when $0.05\leq p \leq 0.95$ and $0.001\leq\alpha\leq0.2$. The cost of using an exact interval instead of the Jeffreys interval is, in terms of required sample size, constant for a fixed expected length $d$.

\begin{figure}
\begin{center}
   \caption{The increase in required sample size when using the Clopper--Pearson interval instead of the Jeffreys, Wilson score and Agresti--Coull intervals, as approximated by (\ref{napprox})--(\ref{napproxAC}).}\label{nfigWAC}
   \includegraphics[width=\textwidth]{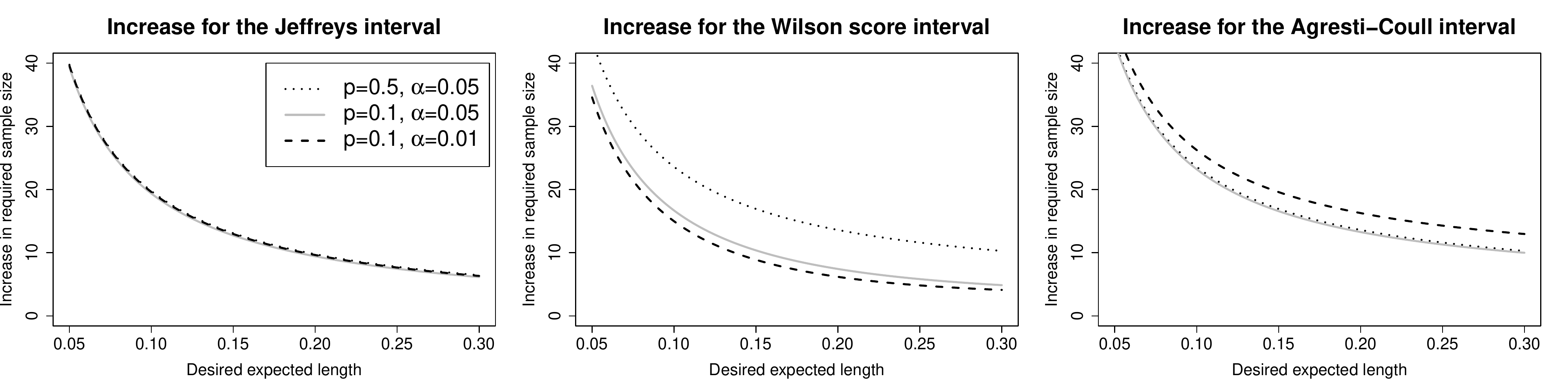}
\end{center}
\end{figure}

Moving on to the Wilson score interval, \citet{pi1} gave the following formula for its sample size: $$n_{WS}(d,p_0,\alpha)=z^2_{\alpha/2}\lbrack p_0q_0+d^2/2+\sqrt{p_0^2q_0^2+d^2(p_0-1/2)^2}\rbrack \lbrack d^2/2\rbrack^{-1}.$$ The increase $n^+_{WS}(d,p_0,\alpha)=n_{CP}(d,p_0,\alpha)-n_{WS}(d,p_0,\alpha)$ can thus be approximated by
\begin{equation}\label{napproxW}\begin{split}
n^+_{WS}(d,p_0,\alpha)\approx d^ {-2}\Big\lbrack d(1+dz_{\alpha/2}^2)+2z_{\alpha/2}\Big(&\sqrt{z_{\alpha/2}^2p_0^2q_0^2+dp_0q_0}\\
&-\sqrt{z_{\alpha/2}^2p_0^2q_0^2+d^2z_{\alpha/2}^2(p_0-1/2)^2}\Big)\Big\rbrack.
\end{split}\end{equation}
The approximation is good when $p_0$ is not very small, typically not differing by more than 2 from the exact value.

Similarly, \citet{pi1} gave the formula $n_{AC}(d,p_0,\alpha)=4z_{\alpha/2}^2p_0q_0d^{-2}-z^2_{\alpha/2}$ for the sample size of the Agresti--Coull interval. Consequently, the increase $n^+_{AC}(d,p_0,\alpha)=n_{CP}(d,p_0,\alpha)-n_{AC}(d,p_0,\alpha)$ is approximately
\begin{equation}\label{napproxAC}\begin{split}
n^+_{AC}(d,p_0,\alpha)\approx \frac{d+z_{\alpha/2}^2(d^2-2p_0q_0)+2z_{\alpha/2}\sqrt{(z_{\alpha/2}p_0q_0)^2+dp_0q_0}}{d^2}.
\end{split}\end{equation}
The expressions (\ref{napproxW}) and (\ref{napproxAC}) are plotted for some combinations of $p$ and $\alpha$ in Figure \ref{nfigWAC}. For the Agresti--Coull interval, the cost is  more or less constant in $p$, but is sensitive to changes in $\alpha$. For the Wilson score interval, the cost depends on both $p$ and $\alpha$.


\subsection{The exact frequentist interval and Bayesian credible intervals with noninformative priors}\label{freqbayes}
Equation (\ref{lengthcomp1}) in Corollary \ref{lencor1} and the fact that (\ref{napprox}) is so insensitive to $p$ and $\alpha$ reveal a strong connection between the frequentist Clopper--Pearson interval and the Bayesian credible interval derived under the Jeffreys prior. In the light of these results, it seems natural to think of the Bayesian interval as a sort of continuity-correction of the Clopper--Pearson interval, in which conservativeness is sacrificed in order to get a short interval.

Attempts to connect the exact frequentist interval with Bayesian intervals have previously been made by \citet{bcd1}, who argued that the Jeffreys interval can be thought of as a continuity-corrected version of the Clopper--Pearson interval. Their argument comes from a comparison between the Jeffreys interval and the mid-p interval, which generally is considered to be a continuity-corrected Clopper--Pearson interval. However, the key step in their argument is their equation (17), which is incorrect; it relies on the false assumption that for two continuous functions $f_1$ and $f_2$, $(f_1+f_2)^{-1}=f_1^{-1}+f_2^{-1}$.

Another natural noninformative Bayesian interval is that based on the uniform prior, $Beta(1,1)$. The Clopper--Pearson interval is essentially this interval \emph{after the prior information has been removed}, a fact which we have not seen mentioned before in the literature. To see this, note that for a central Bayesian interval with prior $Beta(a,b)$, $a,b>0$, the lower bound is given by the beta quantile $p_{L,B}(a,b,X,n)=B(\alpha/2,X+a,n-X+b)$. The parameters $a$ and $b$ can be interpreted as additional successes and failures added to the data. For the uniform prior, $a=b=1$. The lower bound of the Clopper--Pearson interval is similarly the beta quantile $B(\alpha/2,X,n-X+1)$. When $X\notin\{0,n\}$ this can be written as $B(\alpha/2,(X-1)+1,(n-1)-(X-1)+1)=p_{L,B}(1,1,X-1,n-1)$, the lower bound of the $Beta(1,1)$ interval with one success and one failure removed. Expressed in words, the lower bound of the Clopper--Pearson interval equals the lower bound of the Bayesian interval with the uniform prior after the prior information has been removed. Similarly, the upper bound is $1-p_{L,B}(a,b,n-X,n-1)$, i.e. 1 minus the lower bound for $q$ under the uniform prior with one success and one failure removed. The $Beta(1,1)$ interval can thus be thought of as a shrinkage Clopper--Pearson interval.

%

\section{One-sided bounds}\label{onesided}


\subsection{Expected distance to the true proportion}

For one-sided confidence bounds, it is not the expected length that is of interest, but how close the bound is to $p$. Let $L_{U,CP}=p_U-p$ denote the distance from $p_U$ to $p$. The next theorem gives an asymptotic expansion for the expectation of $L_{U,CP}$.

\begin{theorem}\label{lenthm2}
As $n\rightarrow\infty$ the expected distance to $p$ for the $1-\alpha$ one-sided Clopper--Pearson upper bound is
\begin{equation}\label{lenexp2}\begin{split}
E(L_{U,CP})=&n^{-1/2}z_\alpha(pq)^{1/2}+(3n)^{-1}\Big{(}2(1/2-p)z_{\alpha}^2+1+q\Big{)}\\
&+n^{-3/2}z_{\alpha}(pq)^{1/2}\Big{(}-\frac{53}{36}+\frac{\frac{1}{2}-p}{q}+\frac{z_{\alpha}^2+\frac{13}{2}}{36pq}-\frac{13z_{\alpha}^2}{36}\Big{)}+O(n^{-2})
\end{split}\end{equation}
\end{theorem}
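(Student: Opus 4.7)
The plan is to reduce the problem to taking the expectation of an asymptotic expansion of $p_U$ itself, much as one would do for Theorem \ref{lenthm}. Writing $L_{U,CP}=p_U-p=(\hat p-p)+(p_U-\hat p)$, the first summand has mean zero, so only the expectation of $p_U-\hat p$ needs to be computed.

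The first step is to extend Theorem \ref{boundapprox1} (with $\alpha/2$ replaced by $\alpha$) one order further, so that
\[
p_U=\hat p+n^{-1/2}z_\alpha(\hat p\hat q)^{1/2}+(3n)^{-1}\bigl(2(1/2-\hat p)z_\alpha^2+1+\hat q\bigr)+n^{-3/2}H(\hat p,z_\alpha)+O(n^{-2}),
\]
where $H$ is an explicit rational function in $\hat p,\hat q$ and $z_\alpha$. This is the main technical step and is carried out by applying a Cornish--Fisher expansion to the beta quantile representation $p_U=B(1-\alpha,X+1,n-X)$; equivalently, one inverts a two-term Edgeworth expansion of the binomial distribution function at the upper bound. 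This mirrors the derivation that underpins Theorem \ref{boundapprox1}, just retaining one additional term in both the Edgeworth expansion and its formal inversion.

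The second step is to take expectations term by term, using the moments of $\hat p$: $E(\hat p)=p$, $E(\hat p-p)^2=pq/n$, $E(\hat p-p)^3=pq(q-p)/n^2$, and $E(\hat p-p)^4=3p^2q^2/n^2+O(n^{-3})$. The linear term $(3n)^{-1}(2(1/2-\hat p)z_\alpha^2+1+\hat q)$ contributes $(3n)^{-1}(2(1/2-p)z_\alpha^2+1+q)$ exactly, and the $n^{-3/2}$ term contributes $n^{-3/2}H(p,z_\alpha)+O(n^{-5/2})$. For the leading $n^{-1/2}z_\alpha(\hat p\hat q)^{1/2}$ term, Taylor-expanding $f(x)=\sqrt{x(1-x)}$ about $p$ and using $f''(p)=-\tfrac14(pq)^{-3/2}$ gives
\[
E\bigl[(\hat p\hat q)^{1/2}\bigr]=(pq)^{1/2}-\frac{1}{8n(pq)^{1/2}}+O(n^{-2}),
\]
so this term becomes $n^{-1/2}z_\alpha(pq)^{1/2}-z_\alpha\bigl(8n^{3/2}(pq)^{1/2}\bigr)^{-1}+O(n^{-5/2})$.

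The final step is bookkeeping: collecting the $n^{-1/2}$, $n^{-1}$ and $n^{-3/2}$ contributions and verifying that the sum of the Taylor correction $-z_\alpha/\bigl(8(pq)^{1/2}\bigr)$ and the explicit $H(p,z_\alpha)$ coincides with the coefficient $z_\alpha(pq)^{1/2}\bigl(-\tfrac{53}{36}+\tfrac{1/2-p}{q}+\tfrac{z_\alpha^2+13/2}{36pq}-\tfrac{13z_\alpha^2}{36}\bigr)$ stated in (\ref{lenexp2}). The hard part is the first step: tracking enough terms in the Edgeworth expansion and in its Cornish--Fisher inversion to pin down $H(p,z_\alpha)$ to the required precision, and ensuring that remainder terms of order $O(n^{-2})$ can legitimately be dropped after taking expectation, which follows from standard uniform-in-$\hat p$ bounds on the inversion remainder away from the boundary combined with negligible contributions from the boundary events $X\in\{0,n\}$ (whose probabilities decay exponentially for $p\in(0,1)$ fixed).
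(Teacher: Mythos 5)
Your proposal is correct and takes essentially the same route as the paper's (omitted) proof, which likewise expands $p_U$, subtracts $p$, and takes expectations term by term using $E\bigl[(\hat{p}\hat{q})^{1/2}\bigr]=(pq)^{1/2}-\bigl(8n(pq)^{1/2}\bigr)^{-1}+O(n^{-2})$, a Cram\'{e}r-type argument for the smooth $n^{-3/2}$ coefficient, and the exponentially small contribution of the boundary events $X\in\{0,n\}$. The only economy you miss is that your ``main technical step'' is already done: Lemma \ref{limlem} with $\alpha/2$ replaced by $\alpha$ supplies $H(\hat{p},z_\alpha)$ explicitly, and one can verify that $H(p,z_\alpha)-z_\alpha\bigl(8(pq)^{1/2}\bigr)^{-1}$ reproduces the stated $n^{-3/2}$ coefficient, so no fresh Cornish--Fisher inversion is needed.
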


The expansion (\ref{lenexp2}) is compared to the actual expected distance to $p$ in Figure \ref{len2}. Like the expansion for the expected length of the two-sided interval, (\ref{lenexp2}) is close to the actual expected distance even for small $n$.

\begin{figure}[H]
\begin{center}
   \caption{Comparison between the actual expected distance and the expansion (\ref{lenexp2}) for the nominal 95 \% Clopper--Pearson upper bound.}\label{len2}
   \includegraphics[width=\textwidth]{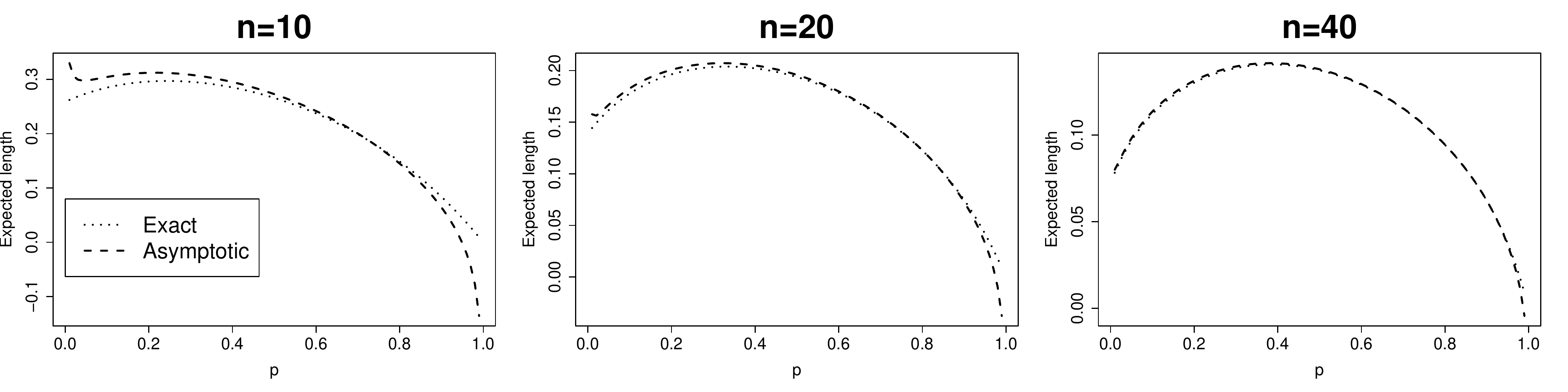}
\end{center}
\end{figure}


\subsection{Sample size determination}
The expressions we obtain in the one-sided case are not quite as simple as those in the two-sided case. Let $d$ denote the desired expected distance to $p$ and let $p_0$ be the initial guess for the value of $p$. Proceeding as before, using the second-order approximation $$E(L_{U,CP})\approx n^{-1/2}z_\alpha(pq)^{1/2}+(3n)^{-1}\Big{(}2(1/2-p)z_{\alpha}^2+1+q\Big{)}$$ yields the required sample size
\[\begin{split}
\Big\lceil n=(2d^ 2)^{-1}\Big(9z_\alpha^2p_0q_0&+3z_\alpha\sqrt{3p_0q_0}\sqrt{3z_\alpha^2p_0q_0+4\lbrack dz_\alpha^2-2dz_\alpha^2p_0+d(1+q_0)\rbrack}\\&\qquad\qquad\qquad\qquad\qquad~+6\lbrack 2z_\alpha^2(1/2-p_0)+(1+q_0)\rbrack\Big)\Big\rceil.
\end{split}\]

This approximation is very good when $d$ is not too small. For smaller $d$ it has a small negative bias: when $\alpha=0.05$ and $p_0=1/2$ the actual required sample size for $d=0.02$ is $n=1738$, whereas the above expression gives the approximation $n=1721$, corresponding to a true expected distance of $d=0.0201$. For most purposes, this will probably be a sufficiently accurate approximation.

As in the two-sided case, we may consider using a prior distribution of $p$, rather than a fixed $p_0$, to determine a reasonable sample size. The expectation of the second-order approximation with respect to a $Beta(a,b)$ prior for $p$ is
\begin{equation}\label{bayesexp}
\frac{(2+z_\alpha^2)\Gamma(2-a)\Gamma(2-b)}{3n\Gamma(4-a-b)}-\frac{(2z_\alpha^2+1)\Gamma(3-a)\Gamma(2-b)}{3n\Gamma(5-a-b)}+\frac{z_\alpha\Gamma(5/2-a)\Gamma(5/2-b)}{\sqrt{n}\Gamma(5-a-b)}.
\end{equation}
Note that this expression is undefinied when $a,b\geq 2$, limiting which priors we can use. When (\ref{bayesexp}) is well-defined, a general formula for the required sample size can be obtained by equating (\ref{bayesexp}) to $d$ and solving for $n$, but the resulting expression is rather complicated. It is however readily evaluated for particular values of $a$ and $b$. For the Jeffreys prior for instance, the required sample size is
$$\Big\lceil n=\frac{6z_\alpha(z_\alpha+\sqrt{z_\alpha^2+9d\pi})}{d^2}+\frac{\pi}{16d}\Big\rceil.$$
The solutions for the Jeffreys and uniform priors as well as the low-informative asymmetric $Beta(1/2,1)$ prior are shown in Figure \ref{nfig3}, along with the solutions for fixed $p_0$ and different values of $\alpha$.

In contrast to the two-sided case, $d$ can in fact be interpreted as an error tolerance for the one-sided bound. This makes the interpretation of $d$ easier in this case.

\begin{figure}[H]
\begin{center}
   \caption{The required sample size for the upper Clopper--Pearson bound for different combinations of $p$ and $\alpha$.}\label{nfig3}
   \includegraphics[width=\textwidth]{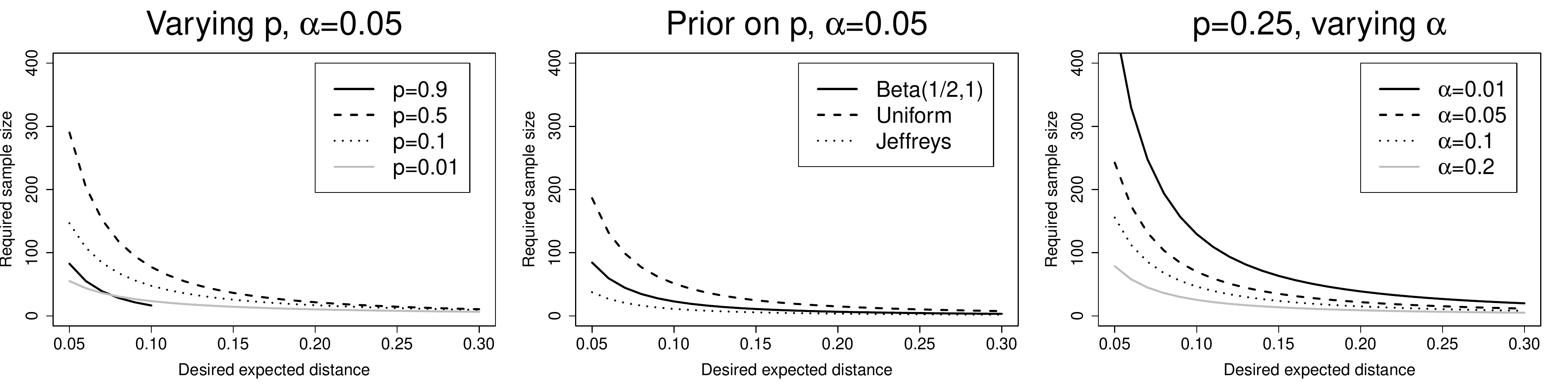}
\end{center}
\end{figure}


\subsection{The cost of using the exact bound}
The cost of using the exact bound will be evaluated in relation to three approximate bounds: The Jeffreys, second-order correct and modified loglikelihood root bounds, described in Section \ref{appendix1}. Comparing (\ref{lenexp2}) to the expansions in Corollary 1 of \citet{cai1} and Proposition 2.2 of \citet{st1}, the following corollary is immediate.

\begin{corollary}
When $L_{U,A}$ denotes the distance of the Jeffreys, second-order correct or modified loglikelihood root bounds,
$$E(L_{U,CP})=E(L_{U,A})+(2n)^{-1}+O(n^{-3/2}).$$
\end{corollary}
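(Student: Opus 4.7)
The plan is to subtract the asymptotic expansions for the three approximate upper bounds from the expansion for $E(L_{U,CP})$ in Theorem \ref{lenthm2} and verify that in each case the difference collapses to $(2n)^{-1} + O(n^{-3/2})$.

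First, I would rewrite Theorem \ref{lenthm2} up to $O(n^{-3/2})$ as
$$E(L_{U,CP}) = n^{-1/2}z_\alpha(pq)^{1/2} + \frac{1}{3n}\bigl(2(1/2-p)z_\alpha^2 + 1 + q\bigr) + O(n^{-3/2}),$$
and then read off the corresponding expansions from Corollary 1 of \citet{cai1} (for the Jeffreys and second-order correct bounds) and Proposition 2.2 of \citet{st1} (for the modified loglikelihood root bound). In all three cases the leading $n^{-1/2}$ coefficient is the same standard normal Edgeworth term $z_\alpha(pq)^{1/2}$, so these terms cancel identically in the subtraction. The work is therefore entirely at order $n^{-1}$.

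Next, I would compare the $n^{-1}$-coefficients. Each approximate bound has a $p$-dependent piece of the form $\frac{1}{3}(2(1/2-p)z_\alpha^2 + q) + c$ for some constant $c$ specific to the method (the piece that encodes skewness and boundary-behaviour corrections). The $p$-dependent pieces are identical to those of the Clopper--Pearson expansion, so they cancel. What is left is a purely numeric difference $(3n)^{-1} \cdot 1 - c/n$, which must be checked to equal $(2n)^{-1}$ for each of the three choices of $c$. This is a routine comparison of three pairs of constants.

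The main obstacle, I expect, is notational: the cited references write their expansions in slightly different forms (grouping terms through $q = 1-p$, factoring out $z_\alpha$, or combining skewness and continuity-correction pieces), so the arithmetic requires careful matching of coefficients rather than any genuinely new analysis. Once the expansions are harmonised the verification is immediate. As a sanity check I would also note consistency with the two-sided Corollary \ref{lencor1}: the one-sided cost $(2n)^{-1}$ should be exactly half of the two-sided cost $n^{-1}$, because the two-sided interval contributes one copy of this constant from each endpoint.
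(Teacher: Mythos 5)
Your proposal matches the paper's proof, which simply states that the corollary is immediate upon comparing the expansion in Theorem \ref{lenthm2} with Corollary 1 of \citet{cai1} and Proposition 2.2 of \citet{st1}: the $n^{-1/2}$ terms cancel and the $n^{-1}$ coefficients differ by the constant $1/2$. Your closing sanity check (the one-sided cost being half the two-sided cost, one copy per endpoint) is a nice consistency observation that the paper does not make explicit, but the route is the same.
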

It should be noted that there are one-sided versions of the Wald and Wilson score intervals, but since these have very poor performance \citep{cai1} they are omitted from our comparison. They can however readily be compared to the Clopper--Pearson bound by comparing (\ref{lenexp2}) to the corresponding expansions in Corollary 1 of \citet{cai1}.

For one-sided bounds, the approximation of the increased sample size when the exact bound is used is more involved than it was for the two-sided cases. To preserve space, we simply use the naive first-order formula $n=z^2_{\alpha/2}pqd^{-2}$ to determine the sample sizes for the approximate bounds. This works reasonably well most of the time. Let $n^+(d,p,\alpha)$ be the increase in sample size when the Clopper--Pearson bound is used instead of an approximate bound. Then, with $\omega(z,d,p)=9z^2pq+12dz^2-24dz^2p$,
\begin{equation}\label{napprox2}\begin{split}
n^+(d,p_0,\alpha)\approx\frac{\sqrt{\omega(z_{\alpha},d,p_0)+12d(1+q_0)}-
\sqrt{\omega(z_{\alpha},d,p_0)+12d(1/2-p_0)}+\frac{d}{2}}{d^{2}}.
\end{split}\end{equation}
Compared to the increased sample size in the two-sided setting, (\ref{napprox2}) is more sensitive to changes in $p$ and $\alpha$. The cost is the smallest when $p=0.5$. When evaluating the increased sample size $p_0=0.5$ is therefore not to be recommended as the default choice, as this can lead to a serious underestimation of the increase, especially for smaller $d$.

\begin{figure}[H]
\begin{center}
   \caption{The increase in required sample size when using the upper Clopper--Pearson bound instead of an approximate upper bound, as approximated by (\ref{napprox2}) for $\alpha=0.05$.}\label{nfig}
   \includegraphics[width=\textwidth]{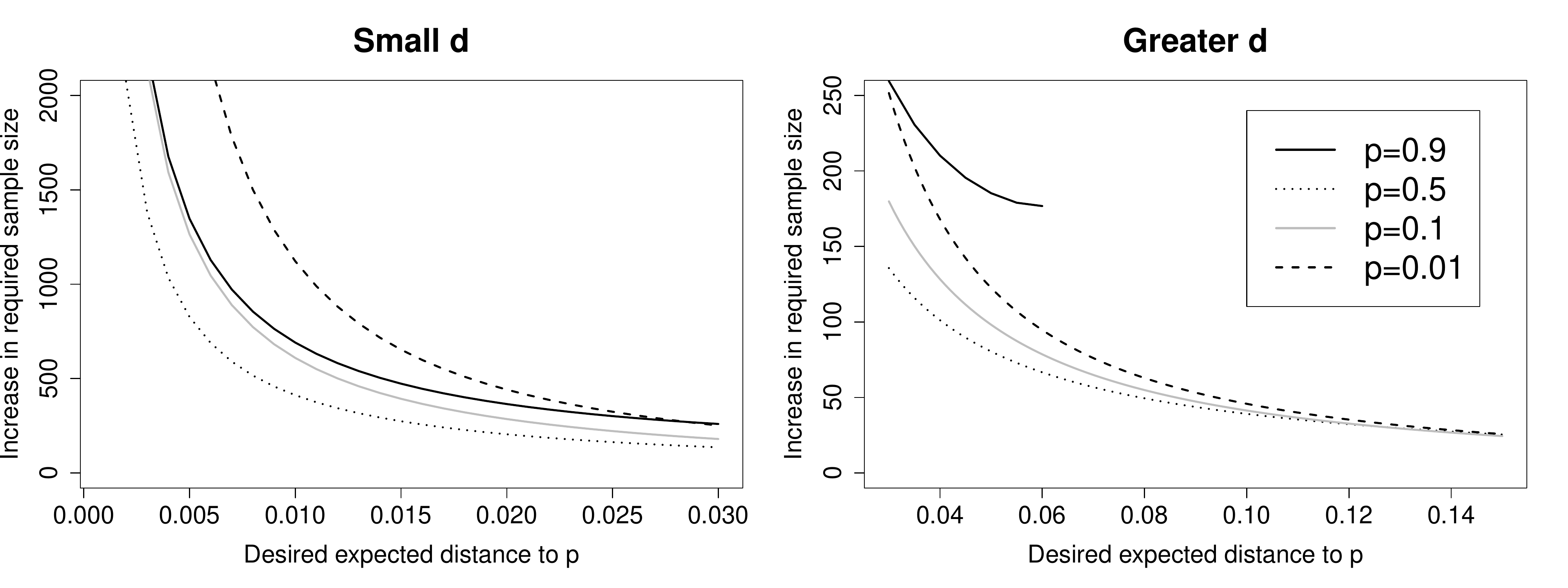}
\end{center}
\end{figure}

\section{Discussion}\label{disc}

\subsection{Minimum coverage or mean coverage?}
The Clopper--Pearson methods are exact in the sense that their minimum coverage over all $p$ is at least $1-\alpha$. An alternative measure of coverage is mean coverage, which typically is taken to be the expected coverage with respect to a uniform pseudo-prior of $p$. In recent papers on binomial confidence intervals, approximate methods have often been considered to be preferable to exact methods \citep{ac1,bcd1,cai1,nn1}, the argument being that it makes more sense to interpret the confidence level as the mean coverage probability rather than the minimum coverage probability, as this corresponds better to how many modern-day statisticians think of coverage levels. Reasoning along the lines of \citet{nn1}, the minimum coverage can occur in an uninteresting part of the parameter space, typically close to the boundaries, possibly rendering it an uninteresting measure of coverage. This is discussed further in the next section.

As noted e.g. by \citet{nn1}, using mean coverage is very much in line with current statistical practice in other problems. Widely used methods based on boostrapping and MCMC, for instance, typically only control confidence levels and type I error rates approximately, attaining the $1-\alpha$ level only on average. This is particularly reasonable when the model is known to be an imperfect representation of the underlying process, in which case even minimum coverage criterions are approximate at best. Unlike in many other applications however, one can often be rather certain that a random variable truly is binomial. This begs the question whether one should resort to approximations or use methods that really are guaranteed to be exact.

If the Bayesian credible intervals based on either the Jeffreys $Beta(1/2,1/2)$ or the uniform $Beta(1,1)$ priors are used, an additional argument for the mean coverage criterion is given by the Bayesian interpretation of these intervals. If we accept mean coverage as a criterion when choosing between confidence intervals, we can obtain intervals that simultaneously admit both frequentist and objective Bayesian interpretations.

The minimum coverage criterion underlying the Clopper--Pearson interval is in line with classical statistical theory. It asserts that overcoverage is a less serious problem than undercoverage, or, in other words, that it is better to be more confident than you think that you are than to be overconfident. Next, in order to evaluate this argument further, we will discuss just how overconfident one risks being when using approximate intervals.


\subsection{The cost of using approximate methods}
Just as there are costs associated with using exact methods, there are costs associated with using approximate methods: the actual coverage level may, even for large $n$, drop below the nominal $1-\alpha$. There is no guarantee that the true $p$ is not in an unfortunate area with low coverage. However, these coverage anomalies usually occur close to the boundaries of the parameter space, so unless we are interested in inference for $p$ close to 0 or 1, it may therefore be more relevant to investigate the minimum over a central subset, such as $\lbrack 0.1,0.9\rbrack$.

The problem of undercoverage is illustrated in Figure \ref{acost}, in which the minimum coverages of the Jeffreys, Wilson and Agresti--Coull intervals are shown for different $n$ when the minimum is taken over either $p\in\lbrack0.01,0.99\rbrack$ or $p\in\lbrack0.1,0.9\rbrack$. For $p\in\lbrack0.01,0.99\rbrack$ and a moderately large sample size of $n=250$, the minimum coverage of the Jeffreys interval is approximately 0.88, whereas the minimum coverage of the Wilson score interval is about 0.93. The Agresti--Coull interval fares somewhat better, with a minimum coverage of 0.94. In this setting neither the Jeffreys nor the Wilson score interval has a minimum coverage above 0.94 even for a sample size as large as $n=2000$.

\begin{figure}
\begin{center}
   \caption{Minimum coverage of two-sided approximate intervals over $p\in\lbrack 0.01,0.99\rbrack$ or $p\in\lbrack 0.1,0.9\rbrack$ when $\alpha=0.05$, computed over a grid of 200,000 equidistant points.}\label{acost}
   \includegraphics[width=\textwidth]{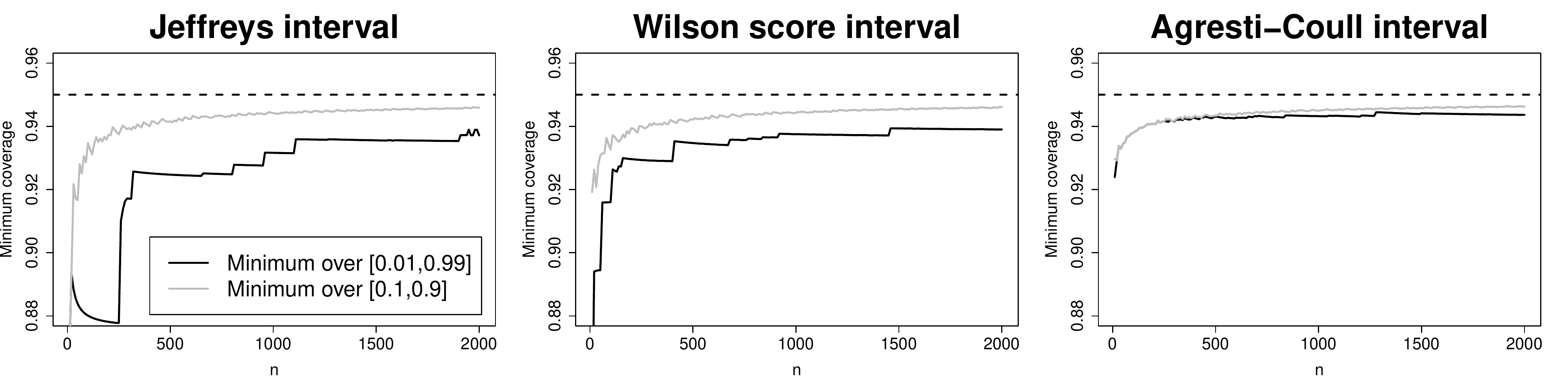}
\end{center}
\end{figure}

A coverage of 0.94 for a nominal 0.95 method is well below what one should expect for sample sizes as large as $n=2000$. If undercoverage of this size is unacceptable, one may apply computer-intensive coverage-adjustment method similar to those discussed in \citet{re1}, decreasing $\alpha$ to some $\gamma$ for which the minimum coverage over some set of values of $p$ is at least $1-\alpha$, thus making the methods exact. Decreasing $\alpha$ will however \emph{increase} the expected length of the intervals. 

Comparing sample sizes of the $1-\gamma$ Jeffreys interval and the $1-\alpha$ Clopper--Pearson interval, we have:
\[
n^+(d,p_0,\alpha,\gamma))\approx\frac{d+2p_0q_0(z_{\alpha/2}^2-2z_{\gamma/2}^2)
+2z_{\alpha/2}\sqrt{z_{\alpha/2}^2p_0^2q_0^2+dp_0q_0}}{d^2}.
\]
For $n$ between 1000 and 1500, computer-intensive adjustments of the Jeffreys interval lead to $\gamma\approx 0.04$ (the actual $\gamma$ being somewhat larger than $0.04$). For $p_0=1/2$ and $d=0.04$, we get $n^+(0.04,1/2,0.05,0.04))\approx -186$, i.e. that the Clopper--Pearson interval requires 186 observations \emph{fewer} to obtain the desired expected length. In general, approximate intervals that have been adjusted to be exact are outperformed by the Clopper--Pearson interval.

Similarly, if one is willing to use approximate intervals, it is possible to apply coverage-adjustments to the Clopper--Pearson interval in order to adjust its mean coverage to $1-\alpha$. The resulting $\gamma>\alpha$, meaning that the interval becomes shorter after the adjustment. \citet{thu1} studied this problem in detail for $n\leq 100$, showing that the adjusted Clopper--Pearson intervals often outperformed its competitors.

%

It should be noted that other criterions than coverage and expected length can be used for comparing confidence intervals. \citet{ne2,ne1} compared location properties, i.e. left and right non-coverage, of intervals and found the Clopper--Pearson interval to have good properties in comparison to some approximate intervals. \citet{vh2} considered two criterions related to $p$-values, motivated by the interpretation of confidence intervals as inverted tests, and found the Clopper--Pearson interval to be better than its competitors.


\subsection{Conclusions}
When choosing between exact and approximate confidence methods, it is important to be aware of the benefits and the costs associated with the two types of methods. The coverage fluctuations of approximate intervals have been compared in several studies, making it easy for practitioners to compare how costly these intervals can be in terms of undercoverage. We have attempted to make the costs of using exact methods explicit, by giving expressions for how much larger the expected length of the exact intervals are and for how much the sample size increases when a fixed expected length is to be attained.

For the two-sided Jeffreys interval, exactness comes at a fixed price: the cost of using the Clopper--Pearson interval instead of this intervals is, in terms of expected length and required sample size, insensitive to $p$ and $\alpha$. For the Agresti--Coull interval, the cost only depends on $\alpha$. This stands in contrast to the Wilson score interval and one-sided bounds, for which $p$ and $\alpha$ can greatly affect the cost. In either case the required sample sizes for the exact methods can be substantially larger than those of the approximate methods.

In our comparison of exact and approximate methods, the only exact methods considered were the Clopper--Pearson interval and bound. While other shorter exact two-sided intervals exist, they suffer from various problems that make them unsuitable for use. Moreover, the Clopper--Pearson interval is used far more often than the other exact intervals, which merits its role as the main subject of this study.

\subsubsection*{Acknowledgement}
The author would like to thank Robert Newcombe and Silvelyn Zwanzig for their many thoughtful comments on an earlier version of this paper.

\appendix

\section*{Appendix: Proofs}\label{appendix2}

Theorem \ref{boundapprox1} follow directly from the following lemma, which is used in the proofs of Theorems \ref{lenthm} and \ref{lenthm2}.

\begin{lemma}\label{limlem}
With assumptions and notation as in Theorem \ref{boundapprox1}, the bounds of the Clopper--Pearson interval are
\[\begin{split}
p_L=\hat{p}&-n^{-1/2}z_{\alpha/2}(\hat{p}\hat{q})^{1/2}+(3n)^{-1}\Big{(}2(1/2-\hat{p})z_{\alpha/2}^2-(1+\hat{p})\Big{)}\\
&-n^{-3/2}z_{\alpha/2}(\hat{p}\hat{q})^{1/2}\Big{(}-\frac{53}{36}-\frac{\frac{1}{2}-\hat{p}}{\hat{p}}+\frac{z_{\alpha/2}^2+11}{36\hat{p}\hat{q}}-\frac{13z_{\alpha/2}^2}{36}\Big{)}+O(n^{-2}),\\
p_U=\hat{p}&+n^{-1/2}z_{\alpha/2}(\hat{p}\hat{q})^{1/2}+(3n)^{-1}\Big{(}2(1/2-\hat{p})z_{\alpha/2}^2+(1+\hat{q})\Big{)}\\
&+n^{-3/2}z_{\alpha/2}(\hat{p}\hat{q})^{1/2}\Big{(}-\frac{53}{36}+\frac{\frac{1}{2}-\hat{p}}{\hat{q}}+\frac{z_{\alpha/2}^2+11}{36\hat{p}\hat{q}}-\frac{13z_{\alpha/2}^2}{36}\Big{)}+O(n^{-2}).
\end{split}\]
\end{lemma}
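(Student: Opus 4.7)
The strategy is to derive the expansion for $p_U$ first, then obtain the expansion for $p_L$ via the exact symmetry
\[
p_L(n, X, \alpha) = 1 - p_U(n, n-X, \alpha),
\]
which follows by reindexing $k \mapsto n-k$ in the defining sums (\ref{cp1}) and (\ref{cp2}). Under this operation $\hat p \leftrightarrow \hat q$, and I would use this at the end as a check: the two displayed expansions in the lemma must transform into each other under $\hat p \leftrightarrow \hat q$ (together with the overall sign flip coming from the $1 - \, \cdot$).

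For $p_U$, I would work with the defining equation $G_n(p_U) = \alpha/2$, where $G_n(p) := \sum_{k=0}^{X}\binom{n}{k}p^k(1-p)^{n-k}$, and treat $\hat p = X/n$ as fixed while $n \to \infty$. The core tool is an Edgeworth expansion with continuity correction for the binomial CDF: writing $w(p) = (X + 1/2 - np)/\sqrt{np(1-p)}$,
\[
G_n(p) = \Phi(w(p)) + \phi(w(p))\bigl[n^{-1/2}A_1(w,p) + n^{-1}A_2(w,p)\bigr] + O(n^{-3/2}),
\]
where $A_1$ and $A_2$ are explicit polynomials in $w$ with smooth $p$-dependence, determined by the standardised cumulants of $\text{Bin}(n,p)$. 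Substituting the ansatz $p_U = \hat p + n^{-1/2}a_1 + n^{-1}a_2 + n^{-3/2}a_3 + O(n^{-2})$, I would Taylor-expand $w(p_U)$ and the $A_i$ around $p = \hat p$ and match powers of $n^{-1/2}$ in $G_n(p_U) = \alpha/2$. The $n^{-1/2}$ balance yields $a_1 = z_{\alpha/2}(\hat p \hat q)^{1/2}$; the $n^{-1}$ balance, which combines the second derivative of $\Phi(w)$, the continuity-correction term from $w(\hat p) = 1/(2\sqrt{n\hat p\hat q})$, and the leading value $A_1(-z_{\alpha/2},\hat p)$, produces $a_2 = \tfrac{1}{3}\bigl(2(\tfrac12 - \hat p)z_{\alpha/2}^2 + 1 + \hat q\bigr)$; the $n^{-3/2}$ balance then determines $a_3$.

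The main obstacle is the careful bookkeeping at order $n^{-3/2}$, where several distinct contributions enter simultaneously: the third-order Taylor expansion of $\Phi(w(p_U))$, the cross-term $a_1 a_2$ arising from the expansion of $w(p_U)$, the derivatives of $A_1$ in both $w$ and $p$ evaluated at $(-z_{\alpha/2}, \hat p)$, and the leading value $A_2(-z_{\alpha/2}, \hat p)$. To keep the algebra tractable I would group the $\hat p$-dependence into the rational blocks $\hat p\hat q$, $(1/2 - \hat p)/\hat q$, and $1/(\hat p\hat q)$ that appear in the stated formula, and then reconcile the four contributions within this basis. As a final sanity check beyond the internal symmetry $\hat p \leftrightarrow \hat q$, one can verify the leading three terms of the $p_U$-expansion by solving the implicit Wilson-type equation $\hat p + 1/(2n) = p_U - z_{\alpha/2}\sqrt{p_U(1-p_U)/n}$ perturbatively: this reproduces $a_1$ and $a_2$ exactly, isolating the genuine Edgeworth contributions to $a_3$.
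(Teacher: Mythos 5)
Your strategy is sound and the symmetry reduction $p_L(n,X,\alpha)=1-p_U(n,n-X,\alpha)$ is exactly right (the two displayed expansions do map into each other under $\hat p\leftrightarrow\hat q$ with the sign flip), but it is a genuinely different route from the paper's. The paper does not touch the Edgeworth expansion at all: it uses the beta--binomial identity (3) to write the Clopper--Pearson bounds as beta quantiles, observes that $B(\alpha/2,X,n-X+1)=B(\alpha/2,(X-1)+1,(n-1)-(X-1)+1)$ is the lower bound of the Bayesian $Beta(1,1)$ interval evaluated at the shifted data $(X-1,n-1)$, and then imports the already-derived asymptotic expansion (A.23) of Brown, Cai and DasGupta (2002) for Bayesian beta-quantile bounds, re-expanding from $\tilde p=(X+a-1)/(n+a+b-2)$ back to $\hat p=X/n$. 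What you propose is essentially to redo from scratch the Edgeworth analysis that underlies that cited expansion. Your version buys self-containedness and an independent check (the Wilson-type perturbative verification of $a_1,a_2$ is a nice touch), but at the cost of the entire order-$n^{-3/2}$ bookkeeping --- the cross-terms from $w(p_U)$, the derivatives of $A_1$, and the value of $A_2$ --- which you describe but do not carry out, and which is precisely where all the difficulty (and all the risk of coefficient errors) lives; the paper sidesteps this entirely by citation. If you pursue your route, be aware that the lattice nature of the binomial means the two-term expansion is only valid with the continuity correction at half-integer points, as you have set it up, and that you must push the expansion of $G_n$ to relative error $O(n^{-3/2})$ to obtain $p_U$ to $O(n^{-2})$, since $\partial G_n/\partial p=O(n^{1/2})$ near $\hat p$.
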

The approximations are close to the actual bounds even for small sample sizes. When $n=25$ and $\hat{p}$ is not too close to $0$ or $1$, the approximations are typically accurate up to at least least two decimal places.

\begin{proof}[Proof of Lemma \ref{limlem}]
First, we note that the lower limit of the Bayesian interval with prior $Beta(a,b)$, $a,b>0$, is given by the beta quantile $p_B(a,b,X,n)=B(\alpha/2,X+a,n-X+b)$. 

For the Clopper--Pearson interval $p_L$ is the beta quantile $B(\alpha/2,X,n-X+1)$. When $X\notin\{0,n\}$ this can be written as $B(\alpha/2,(X-1)+1,(n-1)-(X-1)+1)$, i.e. $p_B(1,1,X-1,n-1)$, the lower limit of the $Beta(1,1)$ interval for $X-1$ and $n-1$. 

An asymptotic expression for $p_B(a,b,X,n)$ in terms of $\tilde{p}=(X+a-1)/(n+a+b-2)$ is given in expression (A.23) in \citet{bcd2}. We obtain the asymptotic expansion of $p_L$ by taking the expansion for $p_B(1,1,X-1,n-1)$ and rewriting the bound in terms of $X/n$, in a manner similar to equation (A.26) of \citet{bcd2}. The expansion of $p_U$ is derived analogously.
\end{proof}


\begin{proof}[Proof of Theorem \ref{lenthm}]
Using the expansion in Lemma \ref{limlem}, when $X\notin\{0,n\}$
\[\begin{split}
L_{CP}=p_U-p_L=&2n^{-1/2}z_{\alpha/2}(\hat{p}\hat{q})^{1/2}+n^{-1}+n^{-3/2}m(\hat{p})+R_n,
\end{split}\]
where
\[
m(\hat{p})=(\hat{p}\hat{q})^{-1/2}\frac{z_{\alpha/2}}{18}\Big{(}z_{\alpha/2}^2+2-17\hat{p}\hat{q}-13\hat{p}\hat{q}z_{\alpha/2}^2\Big{)}
\]
and $E(R_n)=O(n^{-2})$ by Theorem 7 of \citet{bcd2}. As the contribution to expected length given by $X\in\{0,n\}$ is $P(X\in\{0,n\})\cdot(1-(\alpha/2)^{1/n})=O((1/2)^{n})$, when computing $E(L_{CP})$ we can disregard the fact that the above expansion is invalid for $X\in\{0,n\}$.

The $n^{-1/2}$ term is the length of the Wald interval, the expectation of which was given in \citet{bcd2}:
\[
E\Big{(}2z_{\alpha/2}n^{-1/2}(\hat{p}\hat{q})^{1/2}\Big{)}=2z_{\alpha/2}n^{-1/2}(pq)^{1/2}\Big{(}1-(8npq)^{-1}\Big{)}+O(n^{-2}).
\]
$m(\hat{p})$ is bounded when $X\neq \{0,n\}$ and $m(p)$ is twice differentiable for $0<p<1$. Thus, by the theorem in Section 27.7 of \citet{cr1},
\[
E(m(\hat{p}))=(pq)^{-1/2}\frac{z_{\alpha/2}}{18}\Big{(}z_{\alpha/2}^2+2-17pq-13pqz_{\alpha/2}^2\Big{)}+O(n^{-1})
\]
and (\ref{lenexp}) follows after all terms of the same order are collected.
\end{proof}


\begin{proof}[Proof of Corollary \ref{lencor1}]
(\ref{lengthcomp1}) and (\ref{lengthcomp2}) are obtained by comparing (\ref{lenexp}) to the expansions in Theorem 7 och \citet{bcd2}. In particular, 
compared to the length $L_{WS}$ of the Wilson score interval,
\[\begin{split}
E(L_{CP})=&E(L_{WS})+n^{-1}\\&-n^{-3/2}\frac{z}{36(pq)^{1/2}}\Big\lbrack 9z\Big(z+\Big(\frac{26}{9}pq-\frac{2}{9}\Big)^2\Big)+34pq(1-2z^2)-4\Big\rbrack+O(n^{-2}),
\end{split}\]
\noindent
compared to the length $L_{AC}$ of the Agresti--Coull interval,
\[\begin{split}
E(L_{CP})=&E(L_{AC})+n^{-1}\\&-n^{-3/2}\frac{z}{36(pq)^{1/2}}\Big\lbrack 9z\Big(2z+\Big(\frac{26}{9}pq-\frac{2}{9}\Big)^2\Big)+pq(34-108z^2)-4\Big\rbrack+O(n^{-2})
\end{split}\]
\end{proof}


The proof of Theorem \ref{lenthm2} is in analogue with the proof of Theorem \ref{lenthm} and is therefore omitted. It relies on the expansion for the expected distance of the one-sided Wald bound found in Corollary 1 of \citet{cai1}.

\pagestyle{plain}

~\\[3mm]
Contact information:\\
M{\aa}ns Thulin, Department of Mathematics, Uppsala University, Box 480, 751 06 Uppsala, Sweden\\
E-mail: thulin@math.uu.se
\end{document}